\theoremstyle{plain}
\newtheorem{theorem}{Theorem}
\newtheorem{proposition}[theorem]{Proposition}
\newtheorem{remark}[theorem]{Remark}
\newcommand{\bbE}{\mathbb{E}}
\newcommand{\bbR}{\mathbb{R}}
\newcommand{\mL}{\mathcal{L}}
\def\section{\@startsection {section}{1}{\z@}{-3.5ex plus -1ex minus 
-.2ex}{2.3ex plus .2ex}{\normalsize\bf}}
\def\subsection{\@startsection {subsection}{1}{\z@}{-3.5ex plus -1ex minus 
-.2ex}{2.3ex plus .2ex}{\normalsize\bf}}
\begin{document}

\title{\textbf{\large On time-changed Gaussian processes and their associated Fokker-Planck-Kolmogorov equations}}
\author{\textsc{\normalsize Marjorie G.\ Hahn, Kei Kobayashi, Jelena Ryvkina and Sabir Umarov}\thanks{Department of Mathematics, 
                        Tufts University, 503 Boston Avenue, Medford, MA 02155, USA; marjorie.hahn@tufts.edu, kei.kobayashi@tufts.edu, jelena.ryvkina@tufts.edu, sabir.umarov@tufts.edu}
        \vspace{1mm} \\ \textit{\small Tufts University}}
\date{ }
\maketitle                                                       
\vspace{-6mm}
\renewcommand{\thefootnote}{\fnsymbol{footnote}}

\begin{abstract}
This paper establishes Fokker-Planck-Kolmogorov type equations for time-changed
Gaussian processes.    
Examples include those equations for a time-changed fractional Brownian motion with time-dependent Hurst parameter and for a time-changed Ornstein-Uhlenbeck process.  The time-change process considered is the inverse of either a stable subordinator or a mixture of independent stable subordinators.\footnote[0]{\textit{AMS 2010 subject classifications:} Primary 60G15, 35Q84; secondary 60G22. 
     \textit{Keywords:} time-change, inverse subordinator, Gaussian process, Fokker-Planck equation, Kolmogorov equation, fractional Brownian motion, time-dependent Hurst parameter, Volterra process.} 
\end{abstract}

\section{Introduction} 
\label{Sec1}

A one-dimensional stochastic process $X=(X_t)_{t\ge 0}$ is called a \textit{Gaussian process} if the random vector $(X_{t_1},\ldots,X_{t_m})$ has a multivariate Gaussian distribution for all finite sequences $0\le t_1<\cdots<t_m<\infty$.
The joint distributions are characterized by the mean function $\bbE[X_t]$ and the covariance function $R_X(s,t)=\textrm{Cov}(X_s,X_t)$.  
The class of Gaussian processes contains some of the most important stochastic processes in both theoretical and applied probability, including Brownian motion, fractional Brownian motion \cite{book,Cheridito03,turbulence,solar}, and Volterra processes \cite{AlosNualart01,Decr-volterra}.

In this paper, Fokker-Planck-Kolmogorov type 
equations (FPKEs) for time-changed Gaussian processes are derived.   These FPKEs are partial differential equations (PDEs) satisfied by the transition probabilities of those processes. 
Main features of these equations involve 1){\nolinebreak} fractional order derivatives in time and 2) a new class of operators acting on the time variable; see \eqref{FP-volterra3-2}--\eqref{FP-volterra3.5}.    
By definition, the \textit{fractional order derivative $D_{\ast}^{\beta}$ of order $\beta\in (0,1)$ in the sense of Caputo-Djrbashian} is given by
\begin{equation}
\label{Caputo}
  {D}_{\ast}^{\beta} g(t)={D}_{\ast,t}^{\beta} g(t) =
      \frac{1}{\Gamma(1-\beta)}
      \int_0^{t} \frac{g^{\prime}(\tau) }{(t-\tau)^{\beta}}\hspace{1pt} d \tau, 
\end{equation}
with $\Gamma(\cdot)$ being Euler's Gamma function. 
By convention, set $D^1_{\ast,t}=d/dt$.  
Introducing the fractional integration operator
\[
J^{\alpha}g(t)=J^{\alpha}_t g(t)=\frac{1}{\Gamma(\alpha)}\int_0^{t}
(t-\tau)^{\alpha-1}g(\tau)\hspace{1pt}d\tau, ~~ \alpha >0,
\]
one can represent $D_{\ast,t}^{\beta}$ in the form
$D_{\ast,t}^{\beta}=J^{1-\beta}_t\circ (d/dt)$ (see \cite{GM97} for details).

For the last few decades, time-fractional order FPKEs have appeared as an essential tool for the study of dynamics of various complex processes arising in anomalous diffusion in physics
\cite{MetzlerKlafter00,Zaslavsky}, finance \cite{GMSR,Janczura}, hydrology
\cite{BWM} and cell biology \cite{Saxton}.
Using several different methods, many authors derive FPKEs associated with time-changed stochastic processes.
For example, in \cite{MMM-TAMS}, FPKEs for time-changed continuous Markov processes are obtained via the theory of semigroups.   
Paper \cite{HKU} identifies a wide class of stochastic differential equations whose associated FPKEs are represented by time-fractional distributed order pseudo-differential equations.  
The driving processes for these stochastic differential equations are time-changed L\'evy processes.   
Two different approaches are taken, one based on the semigroup technique and the other on the time-changed It\^o formula in \cite{Kobayashi}.   
Paper \cite{HKU-2} provides FPKEs associated with a time-changed fractional Brownian motion from a functional-analytic viewpoint.  
Continuous time random walk-based approaches to derivations of time-fractional order FPKEs are illustrated in \cite{GM1,MMM-ctrw,UG,US}.   
In the current paper, we follow the method presented in \cite{HKU-2}.

Consider a time-change process $E^\beta=(E^\beta_t)_{t\ge 0}$ given by the \textit{inverse}, or the \textit{first hitting time process}, of a $\beta$-stable subordinator $W^\beta=(W^\beta_t)_{t\ge 0}$ with $\beta\in(0,1)$.  The relationship between the two processes is expressed as 
$E^\beta_t=\inf\{s>0\hspace{1pt};\hspace{1pt} W^\beta_s>t \}$.
To make precise the problem being pursued in this paper, first recall the following results.  For proofs of these results as well as analysis on the associated classes of stochastic differential equations, 
see e.g.\ \cite{GM1,HKU,HKU-2,MMM-ctrw}.
Throughout the paper, all processes are assumed to start at 0.
\begin{enumerate}
   \item[(a)] If $E^\beta$ is independent of an $n$-dimensional Brownian motion $B$, then $B$ and the time-changed Brownian motion $(B_{E^\beta_t})$ have respective transition probabilities $p(t,x)$ and $q(t,x)$ satisfying the PDEs
\begin{align}\label{Brownian FPK}
   \partial_t \hspace{1pt}p(t,x)=\frac 12 \varDelta \hspace{1pt}p(t,x) \ \ \ \textrm{and} \ \ \  D_{\ast,t}^{\beta}\hspace{1pt}q(t,x)=\frac 12 \varDelta \hspace{1pt}q(t,x),  
\end{align}
where $\partial_t=\frac{\partial}{\partial t}$ and $\varDelta=\sum_{j=1}^n \partial_{x^j}^2=\sum_{j=1}^n \bigl(\frac{\partial}{\partial {x^j}}\bigr)^2$, with the vector $x\in\bbR^n$ denoted as $x=(x^1,\ldots,x^n)$.
   \item[(b)]  Let $L$ be a L\'evy process whose characteristic function is given by $\bbE[e^{i(\xi,L_t)}]=e^{t \psi(\xi)}$ with symbol $\psi$ (see \cite{Applebaum,Sato}).     
If $E^\beta$ is independent of $L$, then $L$ and the time-changed L\'evy process $(L_{E^\beta_t})$ have respective transition probabilities $p(t,x)$ and $q(t,x)$ satisfying the PDEs 
\begin{align}\label{Levy FPK}
   \partial_t \hspace{1pt}p(t,x)=\mathcal{L}^\ast  p(t,x) \ \ \ \textrm{and} \ \ \ 
   D_{\ast,t}^{\beta}\hspace{1pt}q(t,x)=\mathcal{L}^\ast q(t,x), 
\end{align}
where $\mathcal{L}^\ast$ is the conjugate of the pseudo-differential operator with symbol $\psi$.   
   \item[(c)] If $E^\beta$ is independent of an $n$-dimensional fractional Brownian motion $B^H$ with Hurst parameter $H\in(0,1)$ (definition is given in Example \hyperlink{Application-1}{1}), then $B^H$ and the time-changed fractional Brownian motion $(B^H_{E^\beta_t})$ have respective transition probabilities $p(t,x)$ and $q(t,x)$ satisfying the PDEs
\begin{align}\label{FBM FPK}
   \partial_t \hspace{1pt}p(t,x)=H t^{2H-1}\varDelta \hspace{1pt}p(t,x) \ \ \ \textrm{and} \ \ \ 
   D_{\ast,t}^{\beta}\hspace{1pt}q(t,x)=H G^\beta_{2H-1,t}\varDelta \hspace{1pt}q(t,x), 
\end{align}
where $G^\beta_{\gamma,t}$ with $\gamma\in(-1,1)$ is the operator acting on $t$ given by
\begin{equation} \label{presentation}
G^\beta_{\gamma,t} \hspace{1pt}g(t)= \beta \Gamma(\gamma + 1) J^{1-\beta}_t
\mathcal{L}^{-1}_{s \to t}\biggl[\frac{1}{2\pi i} \int_{C-i
\infty}^{C+i\infty}
\frac{\tilde{g}(z)}{(s^{\beta}-z^{\beta})^{\gamma+1}}\hspace{1pt}dz \biggr](t),
\end{equation}
with $0 < C <s$ and $z^{\beta}=e^{\beta \hspace{1pt} \textrm{Ln} (z)},$
$\textrm{Ln}(z)$ being the principal value of the complex logarithmic function $\ln(z)$ with cut along the negative real axis.
Here $\tilde{g}(s)=\mathcal{L}_{t\to s}[g(t)](s)=\mathcal{L}_{t}[g(t)](s)$ and $\mathcal{L}_{s\to t}^{-1}[f(s)](t)$ denote the Laplace transform and the inverse Laplace transform, respectively. 
\end{enumerate}  

 Obviously (a) provides a special case of both (b) and (c).  Note that in \eqref{Brownian FPK} and \eqref{Levy FPK} the second PDE  
is obtained upon replacing the first order time derivative $\partial_t$
in the first PDE
by the fractional order derivative $D^\beta_{\ast,t}$ and the right-hand side remains unchanged.   On the other hand, as \eqref{FBM FPK} shows, this is not the case for a time-changed fractional Brownian motion; the right hand side of the time-fractional order FPKE has a different form than that of the FPKE for the untime-changed fractional Brownian motion.
Namely, the operator $G^\beta_{2H-1,t}$ instead of $t^{2H-1}$ appears, which is ascribed to dependence between increments over non-overlapping intervals of the fractional Brownian motion $B^H$.   
When $H=1/2$ so that the fractional Brownian motion coincides with a usual Brownian motion, the FPKEs in \eqref{FBM FPK} match with those in \eqref{Brownian FPK}.   
The operators $\{G^\beta_{\gamma,t}; \gamma\in(-1,1)\}$ are known to satisfy the semigroup property (Proposition 3.6 in \cite{HKU-2}).

Following the functional-analytic technique presented in \cite{HKU-2} to obtain the time-fractional order FPKE in \eqref{FBM FPK}, we will establish in Theorem \ref{theorem_FP-volterra} the FPKE for the time-changed Gaussian process $(X_{E^\beta_t})$ under the assumption that the time-change process $E^\beta$ is independent of the Gaussian process $X$.
Moreover, generalization to time-changes which are the inverses of mixtures of independent stable subordinators will be considered in Theorem \ref{theorem_FP-volterra_general}. 
In Section \ref{Sec4}, applications of these results yield FPKEs for time-changed mixed fractional Brownian motions as well as those for time-changed Volterra processes.  
Fractional Brownian motions with time-dependent Hurst parameter $H=H(t)\in(1/2,1)$ are among the Volterra processes considered; see Example \hyperlink{Application-3}{3}.
Two equivalent forms of FPKEs for a time-changed Ornstein-Uhlenbeck process are compared in Example \hyperlink{Application-5}{5}.

\section{Preliminaries}
\label{Sec2}

Let $E^\beta$ be the inverse of a stable subordinator $W^\beta$ starting at $0$ with stability index $\beta \in(0,1)$.  The process $W^\beta$ is a L\'evy process with Laplace transform $\mathbb{E}[e^{-sW^\beta_t}]=e^{-ts^{\beta}}$ and self-similarity: $(W^\beta_{ct})_{t\ge 0}=(c^{1/ \beta} W^\beta_t)_{t\ge 0}$ in the sense of finite dimensional distributions for all $c>0$ (see \cite{Applebaum,Sato}).   
Since $W^\beta$ is strictly increasing, its inverse $E^\beta$ is
continuous and nondecreasing, but no longer a L\'evy process (see \cite{MMM-infinitemean}).  
The density $f_{E_t^\beta}$ of $E_t^\beta$ can be expressed using the density $f_{W_\tau^\beta}$ of $W_\tau^\beta$ as 
\begin{align*} 
f_{E_t^\beta}(\tau) = \partial_\tau \mathbb{P}(E^\beta_t\le \tau)
=\partial_\tau \bigl\{1-\mathbb{P}(W^\beta_\tau < t)\bigr\}
=-\partial_\tau \bigl\{[J^1_t f_{W_\tau^\beta}](t) \bigr\},
\end{align*}
from which it follows that
\begin{align}\label{laplace_Ebeta}
   \mathcal{L}_{t \to s}[f_{E_t^\beta}(\tau)](s)= -\partial_\tau \biggl[\frac{\widetilde{f_{W_\tau^\beta}}(s)}{s}\biggr]
=s^{\beta-1} e^{-\tau s^\beta}, ~ s>0, ~ \tau \ge 0.
\end{align}
The function $f_{E_t^\beta}(\tau)$ is $C^{\infty}$ with respect to the two variables $t$ and $\tau$.

The notion of time-change can be extended to the more general case where
the time-change process is given by the inverse of an arbitrary
mixture of independent stable subordinators. 
Let $\rho_\mu(s)=\int_0^1
s^{\beta}d \mu (\beta),$ where 
$\mu$ is a finite measure with $\textrm{supp}\, \mu \subset (0,1).$
Let $W^{\mu}$ be a nonnegative stochastic
process satisfying $\mathbb{E}[ e^{-sW^{\mu}_t}]=e^{-t \rho_\mu(s)}$ and let
$E^{\mu}_t=\inf \{\tau>0\hspace{1pt};\hspace{1pt} W^{\mu}_{\tau} > t\}.$
Clearly, $W^\mu=W^{\beta_0}$ if $\mu=\delta_{\beta_0}$, the Dirac measure on $(0,1)$ concentrated on a single point $\beta_0$.  
The process
$W^{\mu}$ represents a weighted mixture of independent stable subordinators.    
Similar to the identity \eqref{laplace_Ebeta}, the density $f_{E_t^\mu}$ of $E_t^\mu$ has Laplace transform
\begin{align}\label{laplace_Emu}
   \mathcal{L}_{t \to s}[f_{E_t^\mu}(\tau)](s)
=\frac{\rho_\mu(s)}{s} e^{-\tau \rho_\mu(s)}, ~ s>0, ~ \tau \ge 0.
\end{align}
For further properties of $f_{E_t^\beta}(\tau)$ and $f_{E_t^\mu}(\tau)$, see \cite{HKU-2,MMM-ctrw}.

The above time-change process $E^\mu$ is connected with the \textit{fractional derivative $D^\mu_{\ast}$ with distributed orders} given by 
\begin{align}\label{distributed-derivative}
   D^\mu_{\ast}\hspace{1pt}g(t)=D^\mu_{\ast,t}\hspace{1pt}g(t)=\int_0^1 D^\beta_\ast g(t)\hspace{1pt} d\mu(\beta).   
\end{align}
Namely, if $E^\beta$ is replaced by $E^\mu$ in items (a) and (b) of the list of known results in Section \ref{Sec1}, then the FPKEs for the time-changed Brownian motion $(B_{E^\mu_t})$ and the time-changed L\'evy process $(L_{E^\mu_t})$ are respectively given by (see \cite{HKU})
\[
   D^\mu_{\ast,t} \hspace{1pt}q(t,x)=\frac 12 \varDelta\hspace{1pt} q(t,x)
   \ \ \ \textrm{and} \ \ \  
   D^\mu_{\ast,t} \hspace{1pt}q(t,x)=\mL^\ast q(t,x).  
\] 
For investigations into PDEs with fractional derivatives with distributed orders, see \cite{Kochubey,MMM-ultraslow,UG05}.

The covariance function of a given zero-mean Gaussian process is symmetric and positive semi-definite; conversely, every symmetric, positive semi-definite function on $[0,\infty)\times[0,\infty)$ is the covariance function of some zero-mean Gaussian process (see e.g., Theorem 8.2 of \cite{Janson}).  Examples of such functions include $R_X(s,t)=s \wedge t$ for Brownian motion and $R_X(s,t)=\sigma_0^2+s\cdot t$ which is obtained from linear regression
 (see \cite{RW}).  
The sum and the product of two covariance functions for Gaussian processes are again covariance functions for some Gaussian processes.  For more examples of covariance functions, consult e.g.\ \cite{RW}.

An $n$-dimensional Gaussian process $X=(X^1,\ldots,X^n)$ is a process whose components $X^j$ are independent one-dimensional Gaussian processes with possibly distinct covariance functions $R_{X^j}(s,t)$.
The variance functions $R_{X^j}(t)=R_{X^j}(t,t)$ will play
an important role in establishing FPKEs for Gaussian and time-changed Gaussian processes.  
For differentiable variance functions $R_{X^j}(t)$, let
\begin{align}\label{FP-volterra0.5}
   A=A_X=\frac 12\sum_{j=1}^n R_{X^j}'(t)\hspace{1pt}\partial_{x^j}^2.
\end{align}
Clearly $A=\frac{1}{2}\hspace{1pt}\varDelta$ if $X$ is an $n$-dimensional Brownian motion.

\begin{proposition} \label{proposition_FP-volterra}
Let $X=(X^1,\ldots,X^n)$ be an $n$-dimensional zero-mean Gaussian process with covariance functions $R_{X^j}(s,t)$, $j=1,\ldots,n$.  Suppose the variance functions $R_{X^j}(t)=R_{X^j}(t,t)$ are differentiable on $(0,\infty)$.  
Then the transition probabilities $p(t,x)$ of $X$ satisfy the PDE
\begin{align} \label{FP-volterra1}
   \partial_t \hspace{1pt}p(t,x)= A\hspace{1pt} p(t,x), \ t>0, \ x\in\bbR^n,
\end{align}
where $A$ is the operator in \eqref{FP-volterra0.5}.
\end{proposition}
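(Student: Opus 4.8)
The plan is to reduce the statement to an explicit Gaussian computation, exploiting the independence of the components of $X$. Since $X^1,\ldots,X^n$ are independent and each $X^j_t$ is a one-dimensional zero-mean Gaussian random variable with variance $R_{X^j}(t)$, the density $p(t,x)$ of $X_t=(X^1_t,\ldots,X^n_t)$ factorizes as $p(t,x)=\prod_{j=1}^n p_j(t,x^j)$, where
\[
  p_j(t,x^j)=\frac{1}{\sqrt{2\pi R_{X^j}(t)}}\exp\Bigl(-\frac{(x^j)^2}{2R_{X^j}(t)}\Bigr)
\]
is the centered Gaussian density of variance $R_{X^j}(t)$ (well defined and smooth in $x^j$ for each $t>0$, since $R_{X^j}(t)>0$). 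So it suffices to verify the stated PDE for this explicit product.

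The cleanest way to carry out the verification is via the spatial Fourier transform. Writing $\hat p(t,\xi)=\bbE[e^{i(\xi,X_t)}]$ for the characteristic function of $X_t$, independence gives
\[
  \hat p(t,\xi)=\exp\Bigl(-\tfrac12\sum_{j=1}^n R_{X^j}(t)\,(\xi^j)^2\Bigr),
\]
so that $\partial_t\hat p(t,\xi)=-\tfrac12\sum_{j=1}^n R_{X^j}'(t)(\xi^j)^2\,\hat p(t,\xi)$. On the other hand, since each $\partial_{x^j}^2$ has Fourier multiplier $(i\xi^j)^2=-(\xi^j)^2$, the operator $A$ in \eqref{FP-volterra0.5} acts on $\hat p$ by multiplication by $-\tfrac12\sum_{j=1}^n R_{X^j}'(t)(\xi^j)^2$. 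The two expressions coincide, and inverting the Fourier transform yields $\partial_t p(t,x)=A\,p(t,x)$.

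Alternatively, one may argue directly and avoid the transform: the elementary identity $\partial_v g=\tfrac12\partial_x^2 g$ for the heat kernel $g(v,x)=(2\pi v)^{-1/2}e^{-x^2/(2v)}$ shows that, with the variance $v=R_{X^j}(t)$ playing the role of ``time,'' the chain rule gives $\partial_t p_j(t,x^j)=\tfrac12 R_{X^j}'(t)\,\partial_{x^j}^2 p_j(t,x^j)$. Applying the product rule to $p=\prod_j p_j$ and noting that $\partial_{x^j}^2$ acts only on the $j$-th factor then produces $\partial_t p=\tfrac12\sum_{j=1}^n R_{X^j}'(t)\,\partial_{x^j}^2 p=A\,p$.

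I do not expect a genuine obstacle here: the content is simply that a centered Gaussian density solves the heat equation in its variance variable, and the multidimensional statement follows by independence. The only points needing a word of care are that the one-point density exists and is smooth, which requires $R_{X^j}(t)>0$ for $t>0$, and the routine justification of differentiating under the product (respectively, of interchanging $\partial_t$ with Fourier inversion), both immediate from the rapid decay and smoothness of the Gaussian factors.
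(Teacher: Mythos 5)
Your proposal is correct and takes essentially the same route as the paper: both factor $p(t,x)$ into the product of one-dimensional centered Gaussian densities with variances $R_{X^j}(t)$ and then differentiate, your heat-kernel/chain-rule verification being exactly the ``direct computation'' the paper invokes. The Fourier-transform variant is just an alternative packaging of that same computation.
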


\begin{proof}
Since the components $X^j$ of $X$ are independent zero-mean Gaussian processes, it follows that
\begin{align}
p(t,x)=\prod_{j=1}^n\bigl(2\pi R_{X^j}(t)\bigr)^{-1/2}\times \exp\biggl\{-\sum_{j=1}^n\frac{(x^j)^2}{2R_{X^j}(t)}\biggr\}.
\end{align}
Direct computation of partial derivatives of $p(t,x)$
yields the equality in \eqref{FP-volterra1}.
\end{proof}

\begin{remark}
\begin{em}
a)  In Proposition \ref{proposition_FP-volterra}, if the components $X^j$ are independent Gaussian processes with a \textit{common} variance function $R_{X}(t)$ which is differentiable, then 
\eqref{FP-volterra1} reduces to the following form which agrees with the classical FPKE:
\begin{align} 
   \partial_t \hspace{1pt}p(t,x)=\frac 12 R_X'(t) \varDelta \hspace{1pt}p(t,x).
\end{align}

b) If $X=(X^1,\ldots,X^n)$ is an $n$-dimensional Gaussian process with mean functions $m_{X^j}(t)$ and covariance functions $R_{X^j}(s,t)$, and if both $m_{X^j}(t)$ and $R_{X^j}(t)=R_{X^j}(t,t)$ are differentiable,  
then the associated FPKE contains an additional term:
\begin{align} 
   \partial_t \hspace{1pt}p(t,x)=A \hspace{1pt}p(t,x)
   + B \hspace{1pt}p(t,x) \ \ \ \ \textrm{where} \ \ \ \ 
    B=-\sum_{j=1}^n m_{X^j}'(t)\hspace{1pt} \partial_{x^j}.
\end{align}
Such Gaussian processes include e.g.\ the process defined by the sum of a Brownian motion and a deterministic differentiable function.

c) The initial distribution of the Gaussian process $X$ in Proposition \ref{proposition_FP-volterra} is not given; however, it needs to be specified in order to guarantee uniqueness of the solution to PDE \eqref{FP-volterra1}.  The same argument applies to the PDEs to be established in Theorems \ref{theorem_FP-volterra} and \ref{theorem_FP-volterra_general} as well.
\end{em}
\end{remark}

\section{FPKEs for time-changed Gaussian processes}
\label{Sec3}

Theorems \ref{theorem_FP-volterra} and \ref{theorem_FP-volterra_general} formulate the FPKE for a time-changed Gaussian process under the assumption that the time-change process is independent of the Gaussian process.  
The case where those processes are dependent is not discussed in this paper.  
As in Proposition \ref{proposition_FP-volterra}, the variance function plays a key role here.

\begin{theorem} \label{theorem_FP-volterra} 
Let $X=(X^1,\ldots,X^n)$ be an $n$-dimensional 
zero-mean Gaussian process with covariance functions $R_{X^j}(s,t)$, $j=1,\ldots,n$,
and let $E^\beta$ be the inverse of a stable subordinator $W^\beta$ of index $\beta\in(0,1)$, independent of $X$.   
 Suppose the variance functions $R_{X^j}(t)=R_{X^j}(t,t)$ are differentiable on $(0,\infty)$ and Laplace transformable.  
Then the transition probabilities $q(t,x)$ of the time-changed Gaussian process $(X_{E^\beta_t})$  satisfy the equivalent PDEs
\begin{align} \label{FP-volterra3-2}
   {D}^\beta_{\ast,t} \hspace{1pt}q(t,x)&=
\sum_{j=1}^n J^{1-\beta}_t \Lambda^\beta_{X^j,t}\hspace{1pt} \partial_{x^j}^2 \hspace{1pt}q(t,x), \ t>0, \ x\in\bbR^n,\\
\label{FP-volterra3-1}
   \partial_t \hspace{1pt}q(t,x)&= 
\sum_{j=1}^n  \Lambda^\beta_{X^j,t}\hspace{1pt} \partial_{x^j}^2 \hspace{1pt}q(t,x), \ t>0, \ x\in\bbR^n,
\end{align}
where $\Lambda^\beta_{X^j,t}$, $j=1,\ldots,n$, are the operators acting on $t$ given by
\begin{align} \label{FP-volterra3.5}
  \Lambda^\beta_{X^j,t}\hspace{1pt} g(t)=
 \frac\beta 2\hspace{1pt} \mL^{-1}_{s\to t} 
   \biggl[ \frac 1{2\pi i} \int_{\bm{\mathcal{C}}}
    (s^\beta-z^\beta)\widetilde{R_{X^j}}(s^\beta-z^\beta) 
   \hspace{1pt}\tilde{g}(z)\hspace{1pt} dz\biggr] (t),
\end{align}
with $z^\beta=e^{\beta \hspace{1pt}\textrm{\rm Ln}(z)}$, $\textrm{\rm Ln}(z)$ being the principal value of the complex logarithmic function $\ln(z)$ 
with cut along the negative real axis,
and 
$\bm{\mathcal{C}}$ being a curve in the complex plane obtained via the transformation $\zeta=z^\beta$ which leaves all the singularities of $\widetilde{R_{X^j}}$ on one side.
\end{theorem}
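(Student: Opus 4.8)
The plan is to adapt the functional-analytic method of \cite{HKU-2}: reduce everything to Laplace transforms in $t$, feed in the known PDE for the untime-changed process supplied by Proposition \ref{proposition_FP-volterra}, and read off the operators $\Lambda^\beta_{X^j,t}$ at the end. Because $E^\beta$ is independent of $X$, conditioning on $E^\beta$ gives the subordination identity $q(t,x)=\int_0^\infty p(\tau,x)\hspace{1pt}f_{E_t^\beta}(\tau)\,d\tau$, where $p(\tau,x)$ denotes the transition probability of $X$. Taking the Laplace transform in $t$ and substituting \eqref{laplace_Ebeta} produces the single transfer identity $\tilde q(s,x)=s^{\beta-1}\tilde p(s^\beta,x)$, with $\tilde p(\cdot,x)=\mL_\tau[p(\tau,x)]$; this is the only place where the time-change enters the computation.

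By Proposition \ref{proposition_FP-volterra}, $p$ solves $\partial_\tau p=\frac12\sum_j R_{X^j}'(\tau)\hspace{1pt}\partial_{x^j}^2 p$, with initial data $p(0^+,x)=\delta_0(x)$ since $X_0=0$. Applying the Laplace transform in $\tau$ at the point $\zeta=s^\beta$ gives $s^\beta\tilde p(s^\beta,x)-\delta_0(x)=\frac12\sum_j\partial_{x^j}^2\,\mL_\tau[R_{X^j}'(\tau)p(\tau,x)](s^\beta)$. The product $R_{X^j}'\hspace{1pt}p$ is the heart of the matter: I would express its Laplace transform by the Bromwich convolution $\mL_\tau[R_{X^j}'p](s^\beta)=\frac1{2\pi i}\int_{c-i\infty}^{c+i\infty}\widetilde{R_{X^j}'}(w)\hspace{1pt}\tilde p(s^\beta-w,x)\,dw$, and then use that $R_{X^j}(0)=\mathrm{Var}(X_0^j)=0$ forces $\widetilde{R_{X^j}'}(w)=w\,\widetilde{R_{X^j}}(w)$.

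Next I would substitute $w=s^\beta-z^\beta$, so that $dw=-\beta z^{\beta-1}\,dz$ and, by the transfer identity applied at $z$, $\tilde p(s^\beta-w,x)=\tilde p(z^\beta,x)=z^{1-\beta}\tilde q(z,x)$; the powers $z^{1-\beta}$ and $z^{\beta-1}$ cancel, and after folding the minus sign into the orientation of the image curve $\bm{\mathcal{C}}$ the integral becomes $\frac{\beta}{2\pi i}\int_{\bm{\mathcal{C}}}(s^\beta-z^\beta)\widetilde{R_{X^j}}(s^\beta-z^\beta)\hspace{1pt}\tilde q(z,x)\,dz$. Comparing with \eqref{FP-volterra3.5} identifies the right-hand side as $\mL_t[\sum_j\Lambda^\beta_{X^j,t}\partial_{x^j}^2 q](s,x)$, while on the left $s^\beta\tilde p(s^\beta,x)=s\,\tilde q(s,x)$, so the left-hand side is exactly $\mL_t[\partial_t q](s,x)$ (the two $\delta_0$ terms match and cancel). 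Inverting the Laplace transform gives \eqref{FP-volterra3-1}; applying $J^{1-\beta}_t$ to both sides and using $D^\beta_{\ast,t}=J^{1-\beta}_t\circ\partial_t$ then yields the equivalent form \eqref{FP-volterra3-2}.

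The main obstacle, as usual in this scheme, is making the contour manipulations rigorous. I expect the real work to lie in (i) placing the Bromwich line $\Re w=c$ in the common strip of convergence and justifying the product-to-convolution formula, (ii) verifying that the map $w=s^\beta-z^\beta$ sends this line to a curve $\bm{\mathcal{C}}$ that leaves all singularities of $\widetilde{R_{X^j}}$ on one side, with the branch $z^\beta=e^{\beta\,\mathrm{Ln}(z)}$ controlled across the cut so that the change of variables and orientation reversal are legitimate, and (iii) justifying the interchange of $\partial_{x^j}^2$ and of the inverse Laplace transform with the contour integral. The hypotheses that each $R_{X^j}$ is Laplace-transformable and that $f_{E_t^\beta}$ is smooth (noted after \eqref{laplace_Ebeta}) are precisely what underwrite convergence and these interchanges, so I expect these points to be technical rather than conceptual.
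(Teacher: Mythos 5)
Your proposal is correct and follows essentially the same route as the paper's proof: the subordination identity, the Laplace transfer relation $\tilde q(s,x)=s^{\beta-1}\tilde p(s^\beta,x)$, the Bromwich convolution for $\mathcal{L}_t[R_{X^j}'\hspace{1pt}p]$, the identity $\widetilde{R_{X^j}'}(s)=s\,\widetilde{R_{X^j}}(s)$ from $R_{X^j}(0)=0$, and the change of variables $\zeta=z^\beta$ (your $w=s^\beta-z^\beta$ is the same substitution written from the other end), followed by inversion and application of $J^{1-\beta}_t$. The only cosmetic difference is that you evaluate the transformed PDE directly at $s^\beta$ rather than at general $s$ with a later replacement, which changes nothing.
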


\begin{proof}
Let $p(t,x)$ denote the transition probabilities of the Gaussian process $X$.   
For each $x\in\bbR^n$, it follows from the independence assumption between $E^\beta$ and $X$ that 
\begin{equation} \label{relation}
   q(t,x)=\int_0^{\infty} f_{E_t^\beta}(\tau)\hspace{1pt}p(\tau,x)\;d\tau, \ t>0.
\end{equation}
Relationship \eqref{relation} and equality \eqref{laplace_Ebeta} together yield 
\begin{align} \label{FP-volterra4}
   \tilde{q}(s,x)=s^{\beta-1}\tilde{p}(s^\beta,x), \ s>0.
\end{align}
Since $R_{X^j}(t)$ is Laplace transformable, 
$\widetilde{R_{X^j}}(s)$ exists for all $s>a$, for some constant $a\ge 0$.   
Taking Laplace transforms on both sides of \eqref{FP-volterra1}, 
\begin{align}  \label{FP-volterra4.5}
   s \,\tilde{p}(s,x)-p(0,x)
   &=\frac 12\sum_{j=1}^n\mL_{t\to s}[  R_{X^j}'(t) \partial_{x^j}^2 \hspace{1pt} p(t,x)](s)\\
   &=\frac 12 \sum_{j=1}^n \bigl(\mL_{t}[R_{X^j}'(t)]\ast \mL_{t}[\partial_{x^j}^2 \hspace{1pt} p(t,x)] \bigr)(s)\notag \\
   &=\frac 12 \sum_{j=1}^n  \biggl[ \frac 1{2\pi i} \int_{c-i\infty}^{c+i\infty} \widetilde{R_{X^j}'}(s-\zeta) \partial_{x^j}^2 \hspace{1pt} \tilde{p}(\zeta,x) \hspace{1pt}d\zeta\biggr]\notag\\
   &=\frac \beta 2 \sum_{j=1}^n\biggl[ \frac 1{2\pi i} \int_{\bm{\mathcal{C}}} \widetilde{R_{X^j}'}(s-z^\beta) \partial_{x^j}^2  \hspace{1pt}  \tilde{p}(z^\beta,x)  z^{\beta-1}\hspace{1pt}dz\biggr], \ s>a, \notag
\end{align}
where $\ast$ denotes the convolution of Laplace images and 
the function 
\begin{align} \label{FP-volterra4.6}
   \widetilde{R_{X^j}'}(s)=s\hspace{1pt}\widetilde{R_{X^j}}(s), \ s>a,
\end{align}
exists by assumption.    
Equation \eqref{FP-volterra4.6} is valid since $R_{X^j}(0)=0$ due to the initial condition $X^j(0)=0$.
Since $E^\beta_0=0$ with probability one, it follows that $p(0,x)=q(0,x)$.  Replacing $s$ by $s^\beta$ and using the identity \eqref{FP-volterra4} yields
\begin{align}\label{FP-volterra5}
   s \,\tilde{q}(s,x)-q(0,x)
   =\frac \beta 2 \sum_{j=1}^n \biggl[ \frac 1{2\pi i} \int_{\bm{\mathcal{C}}}   \widetilde{R_{X^j}'}(s^\beta-z^\beta) \partial_{x^j}^2  \hspace{1pt}\tilde{q}(z,x) \hspace{1pt}dz\biggr], \ s>a^{1/\beta}.
\end{align}
Since the left hand side equals $\mL_{t\to s}[\partial_t \hspace{1pt}q(t,x)](s)$, PDE \eqref{FP-volterra3-1} follows upon
substituting \eqref{FP-volterra4.6}
and taking the inverse Laplace transform on both sides.
Moreover, applying the fractional integral operator $J^{1-\beta}_t$ to both sides of \eqref{FP-volterra3-1} yields \eqref{FP-volterra3-2}.
\end{proof}

The next theorem extends the previous theorem to time-changes which are the inverses of mixtures of independent stable subordinators.

\begin{theorem} \label{theorem_FP-volterra_general} 
Let $X=(X^1,\ldots,X^n)$ be an $n$-dimensional 
zero-mean Gaussian process with covariance functions $R_{X^j}(s,t)$, $j=1,\ldots,n$, and let
$E^\mu$ be the inverse of a mixture $W^\mu$ of independent stable subordinators, independent of $X$.  
Suppose the variance functions $R_{X^j}(t)=R_{X^j}(t,t)$ are differentiable on $(0,\infty)$ and Laplace transformable.  
Then the transition probabilities $q(t,x)$ of the time-changed Gaussian process $(X_{E^\mu_t})$ satisfy the PDEs
\begin{align} \label{FP-volterra13-2}
   D^\mu_{\ast,t} \hspace{1pt}q(t,x)= 
\sum_{j=1}^n 
\int_0^1 J^{1-\beta}_t \Lambda^\mu_{X^j,t}\hspace{1pt} \partial_{x^j}^2 \hspace{1pt} q(t,x) \hspace{1pt} d\mu(\beta), \ t>0, \ x\in\bbR^n,
\end{align}
and
\begin{align} \label{FP-volterra13-1}
   \partial_t \hspace{1pt}q(t,x)=
\sum_{j=1}^n  \Lambda^\mu_{X^j,t}\hspace{1pt} \partial_{x^j}^2  \hspace{1pt}q(t,x), \ t>0, \ x\in\bbR^n,
\end{align}
where $D^\mu_{\ast,t}$ is the operator in \eqref{distributed-derivative} and $\Lambda^\mu_{X^j,t}$, $j=1,\ldots,n$, are the operators acting on $t$ given by
\begin{align}
   \Lambda^\mu_{X^j,t}\hspace{1pt} g(t)
   =\frac {1}{2} \hspace{1pt} \mL^{-1}_{s\to t}  
   \biggl[ \frac 1{2\pi i} \int_{\bm{\mathcal{C}}}
   \bigl(\rho_\mu(s)-\rho_\mu(z)\bigr) \widetilde{R_{X^j}}\bigl(\rho_\mu(s)-\rho_\mu(z)\bigr) 
   \hspace{1pt}m_\mu(z) \hspace{1pt}\tilde{g}(z)\hspace{1pt}dz\biggr] (t), 
\end{align}
with $\rho_\mu(z)=\int_0^1 e^{\beta \hspace{1pt} \emph{Ln} (z)}\hspace{1pt}d\mu(\beta)$, $m_{\mu}(z)=\frac{1}{\rho_\mu(z)}\int_0^1 \beta z^{\beta}\hspace{1pt} d \mu(\beta)$, and 
$\bm{\mathcal{C}}$ being a curve in the complex plane obtained via the transformation $\zeta=\rho_\mu(z)$ which leaves all the singularities of $\widetilde{R_{X^j}}$ on one side.
\end{theorem}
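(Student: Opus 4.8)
The plan is to follow the proof of Theorem \ref{theorem_FP-volterra} line by line, replacing each single-index $\beta$-stable quantity by its mixture counterpart. First I would let $p(t,x)$ denote the transition probabilities of $X$, so that by Proposition \ref{proposition_FP-volterra} the density $p$ satisfies \eqref{FP-volterra1}. Independence of $E^\mu$ and $X$ gives the subordination identity $q(t,x)=\int_0^\infty f_{E_t^\mu}(\tau)\hspace{1pt}p(\tau,x)\,d\tau$ exactly as in \eqref{relation}. Applying $\mathcal{L}_{t\to s}$ and invoking \eqref{laplace_Emu} in place of \eqref{laplace_Ebeta} yields the analog of \eqref{FP-volterra4}, namely
\[
   \tilde{q}(s,x)=\frac{\rho_\mu(s)}{s}\,\tilde{p}\bigl(\rho_\mu(s),x\bigr), \quad s>0,
\]
equivalently $\rho_\mu(s)\hspace{1pt}\tilde{p}(\rho_\mu(s),x)=s\hspace{1pt}\tilde{q}(s,x)$.

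Next I would take Laplace transforms of \eqref{FP-volterra1}, treating each term $R_{X^j}'(t)\hspace{1pt}\partial_{x^j}^2 p(t,x)$ as a product and applying the complex convolution theorem for Laplace images, exactly along the chain \eqref{FP-volterra4.5}. This produces a Bromwich integral in a variable $\zeta$ along $c-i\infty$ to $c+i\infty$. The crucial step, replacing the substitution $\zeta=z^\beta$ of Theorem \ref{theorem_FP-volterra}, is to set $s\mapsto\rho_\mu(s)$ and perform the change of variable $\zeta=\rho_\mu(z)$, whose Jacobian is $d\zeta=\rho_\mu'(z)\hspace{1pt}dz$ with $\rho_\mu'(z)=\frac{1}{z}\int_0^1\beta z^\beta\,d\mu(\beta)=\frac{\rho_\mu(z)\hspace{1pt}m_\mu(z)}{z}$ by the definition of $m_\mu$. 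Combining this Jacobian with the relation $\tilde{p}(\rho_\mu(z),x)=\frac{z}{\rho_\mu(z)}\tilde{q}(z,x)$ collapses the factor $\frac{\rho_\mu(z)m_\mu(z)}{z}\hspace{1pt}\tilde{p}(\rho_\mu(z),x)$ into precisely $m_\mu(z)\hspace{1pt}\tilde{q}(z,x)$, which is the source of the weight $m_\mu(z)$ in the statement.

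Together with the identity $\widetilde{R_{X^j}'}(w)=w\widetilde{R_{X^j}}(w)$ from \eqref{FP-volterra4.6}, valid since $R_{X^j}(0)=0$, applied at $w=\rho_\mu(s)-\rho_\mu(z)$, and the equality $p(0,x)=q(0,x)$ arising from $E^\mu_0=0$, this turns the left side into $s\tilde{q}(s,x)-q(0,x)=\mathcal{L}_{t\to s}[\partial_t q(t,x)](s)$ and the right side into the Laplace image of $\sum_j \Lambda^\mu_{X^j,t}\partial_{x^j}^2 q$. Taking inverse Laplace transforms then yields \eqref{FP-volterra13-1}. To reach \eqref{FP-volterra13-2} I would use $D^\mu_{\ast,t}=\int_0^1 J^{1-\beta}_t\circ(d/dt)\,d\mu(\beta)$ coming from \eqref{distributed-derivative} together with $D^\beta_{\ast,t}=J^{1-\beta}_t\circ(d/dt)$: applying each $J^{1-\beta}_t$ to \eqref{FP-volterra13-1} and integrating against $d\mu(\beta)$ moves the distributed-order derivative onto $q$ on the left and produces the operator $\int_0^1 J^{1-\beta}_t\,d\mu(\beta)$ on the right.

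I expect the main obstacle to be the rigorous justification of the contour deformation under the nonlinear map $\zeta=\rho_\mu(z)$: one must verify that the image of the vertical Bromwich line is an admissible curve $\bm{\mathcal{C}}$ keeping all singularities of $\widetilde{R_{X^j}}$ on one side, that all integrals converge, and that the change of variable together with the interchange of $\int_0^1\cdots\,d\mu(\beta)$ and the inverse Laplace transform are legitimate. Since $\rho_\mu$ is a superposition of powers rather than a single power $z^\beta$, the branch structure of $\mathrm{Ln}(z)$ and the large-$|z|$ asymptotics of $\rho_\mu$ require more care than in Theorem \ref{theorem_FP-volterra}, and this is where the bulk of the technical work lies.
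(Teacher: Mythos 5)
Your proposal is correct and takes essentially the same route as the paper's own (sketched) proof: subordination plus \eqref{laplace_Emu} to obtain $\tilde{q}(s,x)=\frac{\rho_\mu(s)}{s}\,\tilde{p}(\rho_\mu(s),x)$, the convolution-theorem Bromwich integral from \eqref{FP-volterra4.5}, the substitution $\zeta=\rho_\mu(z)$ whose Jacobian $\frac{\rho_\mu(z)m_\mu(z)}{z}$ combines with \eqref{FP-volterra14} to yield the weight $m_\mu(z)\tilde{q}(z,x)$, and finally application of $J^{1-\beta}_t$ followed by integration against $d\mu(\beta)$. Your closing remarks on contour deformation and convergence concern technicalities the paper itself leaves implicit, so nothing is missing relative to the published argument.
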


\begin{proof}
We only sketch the proof since it is similar to the proof of Theorem
\ref{theorem_FP-volterra}.  
Let $p(t,x)$ denote the transition probabilities of the Gaussian process $X$.   For each $x\in\bbR^n$, it follows from relationship \eqref{relation} with $f_{E^\beta_t}$ replaced by $f_{E^\mu_t}$ together with equality \eqref{laplace_Emu} that
\begin{align} \label{FP-volterra14}
   \tilde{q}(s,x)=\frac{\rho_\mu(s)}{s}\hspace{1pt}\tilde{p}\bigl(\rho_\mu(s),x\bigr), \ s>0.
\end{align}
Taking Laplace transforms on both sides of \eqref{FP-volterra1} leads to the second to last equality in \eqref{FP-volterra4.5}.  Letting $\zeta=\rho_{\mu}(z)$ yields
\begin{align*} 
   s \,\tilde{p}(s,x)-p(0,x)=\frac 1 2 \sum_{j=1}^n \biggl[ \frac 1{2\pi i} \int_{\bm{\mathcal{C}}} \widetilde{R_{X^j}'}\bigl(s-\rho_\mu(z)\bigr)  \partial_{x^j}^2 \hspace{1pt} \tilde{p}\bigl(\rho_\mu(z),x\bigr)  \frac {\rho_\mu(z)}{z}\hspace{1pt} m_\mu(z) \hspace{1pt}dz\biggr],
\end{align*}
which is valid for all $s$ for which $\widetilde{R_{X^j}}(s)$ exists.
Replacing $s$ by $\rho_\mu(s)$ and using the identity \eqref{FP-volterra14} yields an equation similar to \eqref{FP-volterra5}.  
PDE \eqref{FP-volterra13-1} is obtained upon taking the inverse Laplace transform on both sides.  Finally, applying the fractional integral operator $J^{1-\beta}_t$ and integrating with respect to $\mu$ on both sides of \eqref{FP-volterra13-1} yields \eqref{FP-volterra13-2}.
\end{proof}

\begin{remark}
\begin{em}
a) If $\mu=\delta_{\beta_0}$ with $\beta_0\in(0,1)$, then $\Lambda^\mu_{X^j,t} \hspace{1pt}g(t)=\Lambda^{\beta_0}_{X^j,t}\hspace{1pt}g(t)$ and the FPKEs in \eqref{FP-volterra13-2} and \eqref{FP-volterra13-1}
respectively reduce to the FPKEs in \eqref{FP-volterra3-2} and \eqref{FP-volterra3-1} with $\beta=\beta_0$, as expected. 

b) In Theorem \ref{theorem_FP-volterra_general}, if the components $X^j$ are independent Gaussian processes with a \textit{common} variance function $R_{X}(t)$ which is differentiable and Laplace transformable, then the FPKEs in \eqref{FP-volterra13-2} and \eqref{FP-volterra13-1} respectively reduce to the following simple forms:
\begin{align} 
 D^\mu_{\ast,t} \hspace{1pt}q(t,x) &=  \hspace{1pt}  \int_0^1 J^{1-\beta}_t \Lambda^\mu_{X,t} \hspace{1pt}\varDelta \hspace{1pt} q(t,x)\hspace{1pt} d\mu(\beta);\\
  \partial_t \hspace{1pt}q(t,x)&= \Lambda^\mu_{X,t} \varDelta \hspace{1pt}  q(t,x).
\end{align}

c) As Example \hyperlink{Application-1}{1} in Section \ref{Sec4} shows, Theorem \ref{theorem_FP-volterra} extends the time-fractional order FPKE in \eqref{FBM FPK} for a time-changed fractional Brownian motion to that for a general time-changed Gaussian process, revealing the role of the variance function in describing the dynamics of the process.
\end{em}
\end{remark}

\section{Applications} \label{Sec4}

This section is devoted to applications of the results established in this paper concerning FPKEs for Gaussian and time-changed Gaussian processes.  For simplicity of discussion, we will consider the time-changed process $E^\beta$ given by the inverse of a stable subordinator $W^\beta$ of index $\beta\in(0,1)$, rather than the more general time-change process $E^\mu$.

\vspace{3.2mm}

\noindent
\textsl{Example \hypertarget{Application-1}{1}. Fractional Brownian motion.}
One of the most important Gaussian processes in applied probability is a fractional Brownian motion $B^H$ with Hurst parameter $H\in(0,1)$.  
A one-dimensional \textit{fractional Brownian motion} is a zero-mean Gaussian process with covariance function
\begin{equation}\label{FBM-cov}
R_{B^H}(s,t)=\bbE[B_s^H B_t^H]=\frac{1}{2} (s^{2H}+t^{2H}- |s-t|^{2H}).
\end{equation}
If $H=1/2,$ then 
$B^H$ becomes a usual Brownian motion.  
An $n$-dimensional fractional Brownian motion is
an $n$-dimensional process whose components are independent fractional Brownian motions with a common Hurst parameter.

Stochastic processes driven by a fractional Brownian motion $B^H$ are of increasing interest for both theorists and applied researchers due
to their wide application in fields such as mathematical finance
\cite{Cheridito03}, solar activities \cite{solar} and
turbulence \cite{turbulence}.
The process $B^H,$ like the usual Brownian motion,
has nowhere differentiable paths and stationary increments; however, it does not have independent increments.  
$B^H$ has the integral representation 
$B_t^H=\int_0^t K_H(t,s)\hspace{1pt}d B_s,$ where $B$ is a Brownian motion and $K_H(t,s)$ is a deterministic kernel.  
$B^H$ is not a semimartingale unless $H=1/2$, so the usual It\^o's stochastic calculus is not valid. 
For details of the above properties, see \cite{book,Nualart}.

Let $B^H$ be an $n$-dimensional fractional Brownian motion and let $E^\beta$ be the inverse of a stable subordinator of index $\beta\in(0,1)$, independent of $B^H$.  Then the components of $B^H$ share the common variance function $R_{B^H}(t)=t^{2H}$ and its Laplace transform $\widetilde{R_{B^H}'}(s)=2H\hspace{1pt} \Gamma(2H)/s^{2H}$.  
Hence, Proposition \ref{proposition_FP-volterra} and Theorem \ref{theorem_FP-volterra} immediately recover both the FPKEs in \eqref{FBM FPK} for the fractional Brownian motion $B^H$ and the time-changed fractional Brownian motion $(B^H_{E^\beta_t})$.  
In this case,
\begin{align} \label{correspondence-operators}
J^{1-\beta}_t \Lambda^\beta_{B^H,t}=H G^\beta_{2H-1,t},
\end{align}
where $G^\beta_{\gamma,t}$ is the operator given in \eqref{presentation}.   
Note that the curve $\bm{\mathcal{C}}$ appearing in the expression of the operator $\Lambda^\beta_{B^H,t}$ in \eqref{FP-volterra3.5} can be replaced by a vertical line $\{C+i r \hspace{1pt}; \hspace{1pt}r\in \bbR\}$ with $0<C<s$ since the integrand has a singularity only at $z=s$.

\vspace{3.2mm}

\noindent
\textsl{Example \hypertarget{Application-2}{2}.  Mixed fractional Brownian motion.}  
Let $X$ be an $n$-dimensional process defined by a finite linear combination of independent zero-mean Gaussian processes $X_{1},\ldots, X_{m}$: $X_t=\sum_{\ell=1}^m a_\ell X_{\ell,t}$ with $a_1,\ldots,a_m\in\bbR$. 
For simplicity, assume that for each $\ell=1,\ldots,m$, the components of the vector $X_\ell=(X_\ell^1,\ldots,X_\ell^n)$ share a common variance function $R_{X_\ell}(t)$.  
Then the process $X$ is again a Gaussian process whose components have the same variance function $R_X(t)=\sum_{\ell=1}^m a_\ell^2 R_{X_\ell}(t)$. 
Therefore, it follows from Proposition \ref{proposition_FP-volterra} and Theorem \ref{theorem_FP-volterra} that the FPKEs for $X$ and $(X_{E^\beta_t})$, under the independence assumption between $E^\beta$ and $X$, are respectively given by 
\begin{align}  \label{mixed FPK}
   \partial_t \hspace{1pt}p(t,x)=\phi(t) \hspace{1pt}\varDelta \hspace{1pt}p(t,x) \ \ \ \textrm{and} \ \ \   
   D_{\ast,t}^{\beta}\hspace{1pt}q(t,x)=\Phi^\beta_t \hspace{1pt}\varDelta \hspace{1pt} q(t,x),
\end{align}
where $\phi(t)=\frac 12 \sum_{\ell=1}^m a_\ell^2 R_{X_{\ell}}' (t)$ and $\Phi^\beta_t=\sum_{\ell=1}^m a_\ell^2 J^{1-\beta}_t \Lambda^\beta_{X_{\ell},t}$.  Notice that $\phi(t)$ simply denotes the multiplication by a function of $t$ whereas $\Phi^\beta_t$ is an operator acting on $t$.  
This generalizes the correspondence between the function $t^{2H-1}$ and the operator $G^\beta_{2H-1,t}$ observed in the FPKEs in \eqref{FBM FPK} for the fractional Brownian motion and the time-changed fractional Brownian motion.

A \textit{mixed fractional Brownian motion} is a finite linear combination of independent fractional Brownian motions (see \cite{MRR,Thale} for its properties). 
It was introduced in \cite{Cheridito01} to discuss the price of a European call option on an asset driven by the process.  The process $X$ considered in that paper is of the form $X_t=B_t+a \hspace{1pt}B_t^H$, where $a\in\bbR$, $B$ is a Brownian motion, and $B^H$ is a fractional Brownian motion with Hurst parameter $H\in(0,1)$.   In this situation, the FPKEs in \eqref{mixed FPK}, with the help of \eqref{correspondence-operators}, yield
\begin{align}
   \partial_t \hspace{1pt}p(t,x)&=\frac 12\hspace{1pt}\varDelta \hspace{1pt}p(t,x)+a^2 H t^{2H-1}\hspace{1pt}\varDelta \hspace{1pt}p(t,x); \\ 
   D_{\ast,t}^{\beta}\hspace{1pt}q(t,x)&=\frac 12 \hspace{1pt}\varDelta\hspace{1pt}q(t,x)+a^2 H  G^\beta_{2H-1,t} \hspace{1pt}\varDelta \hspace{1pt} q (t,x). 
\end{align}

\vspace{3.2mm}

\noindent
\textsl{Example \hypertarget{Application-3}{3}. Fractional Brownian motion with variable Hurst parameter.}  
\textit{Volterra processes} form an important subclass of Gaussian processes.  
They are continuous zero-mean Gaussian processes $V=(V_t)$ defined on a given finite interval $[0,T]$ with integral representations of the form $V_t=\int_0^t K(t,s)\hspace{1pt} dB_s$ for some deterministic kernel $K(t,s)$ and Brownian motion $B$ (see \cite{AlosNualart01,Decr-volterra} for details).   Fractional Brownian motions are clearly an example of a Volterra process.  In particular, if $B^H$ is a fractional Brownian motion with Hurst parameter $H\in(1/2,1)$, then $B^H=\int_0^t K_H(t,s)\hspace{1pt} dB_s$ with the kernel
\begin{align} \label{K_H-1}
   K_H(t,s)=c_H s^{1/2-H}\int_s^t (r-s)^{H-3/2}\hspace{2pt}r^{H-1/2} \hspace{1pt}dr, \ \  t>s, 
\end{align}
where the positive constant $c_H$ is chosen so that the integral
$\int_0^{t\wedge s} K_H(t,r)K_H(s,r) \hspace{1pt}dr$ coincides with 
$R_{B^H}(s,t)$ in \eqref{FBM-cov}.
Increments of $B^H$ exhibit long range dependence.

A particular interesting Volterra process
is the fractional Brownian motion with time-dependent Hurst parameter $H(t)$ suggested in Theorem 9 of \cite{Decr-volterra}.
Namely, suppose $H(t): [0,T]\rightarrow (1/2,1)$ is a deterministic function satisfying the following conditions:
\begin{align}\label{condition_star}
    \inf_{t\in[0,T]} H(t)>\frac 12 \ \hspace{3pt} \textrm{and} \ \hspace{3pt}  
    H(t)\in \mathcal{S}_{1/2+\alpha,2} \ \hspace{3pt}\textrm{for some} \ \hspace{3pt}  
     \alpha\in\biggl(0,\hspace{2pt}\inf_{t\in[0,T]}H(t)-\frac 12\biggr), 
\end{align}
where $\mathcal{S}_{\eta,2}$ is the Sobolev-Slobodetzki space given by the closure of the space $C^1[0,T]$ with respect to the semi-norm 
\begin{align}
   \|  f \|^2= \int_0^T\hspace{-1.2mm} \int_0^T \frac{|f(t)-f(s)|^2}{|t-s|^{1+2\eta}}\hspace{1pt} dt \hspace{1pt}ds.  
\end{align}
Then representation \eqref{K_H-1} with $H$ replaced by $H(t)$ induces a covariance function $R_{V}(s,t)=\int_0^{t\wedge s} K_{H(t)}(t,r) \hspace{1pt}K_{H(s)}(s,r)\hspace{1pt}dr$ for some Volterra process $V$ on $[0,T]$.   
The variance function is given by $R_{V}(t)=t^{2H(t)}$ and is necessarily continuous due to the Sobolev embedding theorem, which says $\mathcal{S}_{\eta,2}\subset C[0,T]$ for all $\eta>1/2$ (see e.g.\ \cite{Holmander3}).
Therefore, $H(t)$ is also continuous.

Let $H(t): [0,\infty) \rightarrow (1/2,1)$ be a differentiable function
whose restriction to any finite interval $[0,T]$ satisfies the conditions in  \eqref{condition_star}.   
For each $T>0$, let $K_{V^T}(s,t)$ be the kernel inducing the covariance function $R_{V^T}(s,t)$ of the associated Volterra process $V^T$ defined on $[0,T]$ as above.  
The definition of $K_{V^T}(s,t)$ is consistent; i.e.\ $K_{V^{T_1}}(s,t)=K_{V^{T_2}}(s,t)$ for any $0\le s,\hspace{1pt} t\le T_1\le T_2<\infty$.  
Hence, so is that of $R_{V^T}(s,t)$, which implies that the function $R_X(s,t)$ given by $R_X(s,t)=R_{V^T}(s,t)$ whenever $0\le s,\hspace{1pt} t\le T<\infty$ is 
a well-defined covariance function of a Gaussian process $X$ on $[0,\infty)$ whose restriction to each interval $[0,T]$ coincides with $V^T$.
The process $X$ represents a fractional Brownian motion with variable Hurst parameter.  
The variance function $R_X(t)=t^{2H(t)}$ is differentiable on $(0,\infty)$ by assumption and Laplace transformable due to the estimate $R_X(t)\le t^2$.   Therefore, Proposition \ref{proposition_FP-volterra} and Theorem \ref{theorem_FP-volterra} can be applied to yield the FPKEs for $X$ and the time-changed process $(X_{E^\beta_t})$ under the independence assumption between $E^\beta$ and $X$.

\vspace{3.2mm}

\noindent
\textsl{Example \hypertarget{Application-4}{4}. Fractional Brownian motion with piecewise constant Hurst parameter.}  
The fractional Brownian motion discussed in Example \hyperlink{Application-3}{3} has a continuously varying Hurst parameter $H(t): [0,\infty) \rightarrow (1/2,1)$.  Here we consider a piecewise constant Hurst parameter $H(t): [0,\infty) \rightarrow (0,1)$ which is described as
\begin{align}\label{H(t)}
H(t)=\sum_{k=0}^N H_k \textbf{\textit{I}}_{[T_k,T_{k+1})}(t),
\end{align} 
where $\{H_k\}_{k=0}^N$ are constants in $(0,1)$, $\{T_k\}_{k=0}^N$ are fixed times such that $0=T_0<T_1<\cdots<T_N<T_{N+1}=\infty$, and $\textbf{\textit{I}}_{[T_k,T_{k+1})}$ denotes the indicator function over the interval $[T_k,T_{k+1})$. 

For each $k=0,\ldots, N$, let $B^{H_k}$ be an $n$-dimensional fractional Brownian motion with Hurst parameter $H_k$.  Let $X$ be the process defined by 
\begin{align}
X_t=\sum_{j=0}^{k-1} (B^{H_j}_{T_{j+1}}-B^{H_j}_{T_j})+(B^{H_k}_t-B^{H_k}_{T_k}) \ \ \ \textrm{whenever} \ \ \  t\in[T_k,T_{k+1}).
\end{align}  
Then $X$ is a continuous process representing a fractional Brownian motion which involves finitely many changes of mode of Hurst parameter (described in \eqref{H(t)}).   

The transition probabilities of the process $X$ are constructed as follows.  
For each $k=0,\ldots,N$, let $\theta_k(t)=H_k \hspace{1pt}t^{2H_k -1}$ for $t\in[T_k,T_{k+1})$.  
Let $\{p_k(t,x)\}_{k=0}^N$ be a sequence of the unique solutions to the following initial value problems, each defined on $[T_k,T_{k+1})\times\bbR^n$:
\begin{align}
\partial_t \hspace{1pt} p_0(t,x)&=\theta_0(t) \hspace{1pt} \varDelta \hspace{1pt} p_0(t,x), \ t\in(0,T_1), \ x\in\bbR^n,\\ 
p_0(0,x)&=\delta_0(x), \ x\in\bbR^n,
\end{align}
where $\delta_0(x)$ is the Dirac delta function with mass on $0$, and for $k=1,\ldots, N$,
\begin{align}
\partial_t \hspace{1pt} p_k(t,x)&=\theta_k(t) \hspace{1pt} \varDelta \hspace{1pt} p_k(t,x), \ t\in(T_k,T_{k+1}), \ x\in\bbR^n,\\
 p_k(T_k,x)&=p_{k-1}(T_k-0,x), \ x\in\bbR^n. 
\end{align}
Define functions $\theta(t)$ and $p(t,x)$ respectively by $\theta(t)=\theta_k(t)$ and  $p(t,x)=p_k(t,x)$ whenever $t\in[T_k,T_{k+1})$.  
Then the transition probabilities of $X$ are given by $p(t,x)$ and satisfy
\begin{align}
 \partial_t \hspace{1pt} p(t,x)&=\theta(t) \hspace{1pt} \varDelta \hspace{1pt} p(t,x), \ t\in{\textstyle \bigcup_{k=0}^N}(T_k,T_{k+1}), \ x\in\bbR^n,\\ 
p(0,x)&=\delta_0(x), \ x\in\bbR^n.\\
p(T_k,x)&=p(T_k-0,x), \ x\in\bbR^n, \ k=1,\ldots,N. 
\end{align}
Discussion on existence and uniqueness of the solution to this type of 
initial value problems
is found in \cite{US09}.  
To ensure transition probabilities which are continuous in time, the initial value problem associated with the time-changed process $(X_{E^\beta_t})$ is given by
\begin{align}
 D^\beta_{\ast,t} \hspace{1pt} q(t,x)&=\Theta^\beta_t \hspace{1pt} \varDelta \hspace{1pt} q(t,x), \ t\in(0,\infty), \ x\in\bbR^n,\\ 
q(0,x)&=\delta_0(x), \ x\in\bbR^n,\\
q(T_k,x)&=q(T_k-0,x), \ x\in\bbR^n, \ k=1,\ldots,N,
\end{align}
where $\Theta^\beta_{t}$ is the operator acting on $t$ defined by $\Theta^\beta_{t}=\sum_{k=0}^N H_k \hspace{1pt}G^\beta_{2H_k-1,t}\hspace{1pt} \textbf{\textit{I}}_{[T_k,T_{k+1})}(t)$.

\begin{remark}
\begin{em}
Combining ideas in Examples \hyperlink{Application-3}{3} and \hyperlink{Application-4}{4}, it is possible to construct a fractional Brownian motion having variable Hurst parameter $H(t)\in(1/2,1)$ with finitely many changes of mode and to establish the associated FPKEs.
\end{em}
\end{remark}

\vspace{1mm}

\noindent
\textsl{Example \hypertarget{Application-5}{5}. Ornstein-Uhlenbeck process.}
Consider the one-dimensional Ornstein-Uhlenbeck process $Y$ given by 
\begin{align} \label{example_OU-1}
Y_t=y_0\hspace{1pt}e^{-\alpha t}+\sigma \int_0^t e^{-\alpha(t-s)}\hspace{1pt}dB_s, \ t\ge 0,
\end{align}
where $\alpha\ge 0$, $\sigma>0$, $y_0\in\bbR$ are constants and $B$ is a standard Brownian motion.  
If $\alpha=0$, then $Y_t=y_0+\sigma B_t$, a Brownian motion multiplied by $\sigma$ starting at $y_0$.
Suppose $\alpha>0$.  The process $Y$ defined by \eqref{example_OU-1} is the unique strong solution to the inhomogeneous linear SDE
   \begin{align}
          dY_t=-\alpha Y_tdt +\sigma dB_t \ \ \textrm{with} \ \ Y_0=y_0,  \label{SDE81-1}
     \end{align}
which is associated with the SDE 
   \begin{align}
          d\bar{Y}_t=-\alpha \bar{Y}_tdE^\beta_t +\sigma dB_{E^\beta_t} \ \ \textrm{with} \ \ \bar{Y}_0=y_0,  \label{SDE81-2}
     \end{align}
via the dual relationships $\bar{Y}_t=Y_{E^\beta_t}$ and $Y_t=\bar{Y}_{W^\beta_t}$; 
see \cite{Kobayashi} for details.

Consider the zero-mean process $X$ defined by
\begin{align}
   X_t=Y_t-y_0\hspace{1pt}e^{-\alpha t}=\sigma \int_0^t e^{-\alpha(t-s)}\hspace{1pt}dB_s.
\end{align} 
$X$ is a Gaussian process since each random variable $X_t$ is a linear transformation of the It\^o stochastic integral of the deterministic integrand $e^{\alpha s}$.
Direct calculation yields 
$R_X(t)=\frac{\sigma^2}{2\alpha}(1-e^{-2\alpha t})$ and
$\widetilde{R'_{X}}(s)=\frac{\sigma^2}{s+2\alpha}$.  
Therefore, due to Proposition \ref{proposition_FP-volterra} and Theorem \ref{theorem_FP-volterra}, the initial value problems associated with $X$ and $(X_{E^\beta_t})$, where $E^\beta$ is independent of $X$, are respectively given by 
\begin{align}\label{example_OU-2}
   \partial_t \hspace{1pt}p(t,x)&=\frac{\sigma^2}{2}\hspace{1pt} e^{-2\alpha t}\hspace{1pt}\partial_x^2 \hspace{1pt}p(t,x), \ p(0,x)=\delta_{0}(x);\\
\label{example_OU-3}   D_{\ast,t}^{\beta}\hspace{1pt}q(t,x)&=\frac{\sigma^2 \beta}{2} \hspace{1pt}J^{1-\beta}_t
\mathcal{L}^{-1}_{s \to t}\biggl[\frac{1}{2\pi i} \int_{\bm{\mathcal{C}}}
\frac{\partial_x^2\hspace{1pt}\tilde{q}(z,x)}{s^\beta-z^\beta+2\alpha}\; dz \biggr](t), \ q(0,x)=\delta_{0}(x).
\end{align}
The unique representation of the solution to the initial value problem \eqref{example_OU-2} is obtained via the usual technique using the Fourier transform.  Moreover, expression \eqref{relation} guarantees uniqueness of the solution to \eqref{example_OU-3} as well.

Notice that the two processes $X$ and $(X_{E^\beta_t})$ are unique strong solutions to SDEs \eqref{SDE81-1} and \eqref{SDE81-2} with $y_0=0$, respectively.  Therefore, it is also possible to apply Theorem 4.1 of \cite{HKU} to obtain the following forms of initial value problems which are understood in the sense of generalized functions: 
\begin{align}\label{example_OU-4}
   \partial_t \hspace{1pt}p(t,x)&=\alpha \hspace{1pt}\partial_x \bigl\{x p(t,x) \bigr\} + \frac{\sigma^2}{2}\hspace{1pt} \partial_x^2 \hspace{1pt}p(t,x), \ p(0,x)=\delta_{0}(x);\\
\label{example_OU-5}   D_{\ast,t}^{\beta}\hspace{1pt}q(t,x)&=\alpha \hspace{1pt}\partial_x \bigl\{x q(t,x) \bigr\} + \frac{\sigma^2}{2}\hspace{1pt} \partial_x^2 \hspace{1pt}q(t,x), \ q(0,x)=\delta_{0}(x).
\end{align}
Actually these FPKEs hold in the strong sense as well.  
For uniqueness of solutions to \eqref{example_OU-4} and \eqref{example_OU-5}, see e.g.\ \cite{Friedman} and Corollary 3.2 of \cite{HKU}.

The above discussion yields the following two sets of equivalent initial value problems: \eqref{example_OU-2} and \eqref{example_OU-4}, and \eqref{example_OU-3} and \eqref{example_OU-5}.
At first glance, PDE \eqref{example_OU-2} might seem simpler or computationally more tractable than PDE \eqref{example_OU-4}; however, PDE \eqref{example_OU-3} which is associated with the time-changed process has a more complicated form than PDE \eqref{example_OU-5}.    
A significant difference between PDEs \eqref{example_OU-2} and \eqref{example_OU-4} is the fact that the right-hand side of \eqref{example_OU-4} can be expressed as $A^\ast p(t,x)$ with the \textit{spatial} operator $A^\ast=\alpha \hspace{1pt}\partial_x x + \frac{\sigma^2}{2}\hspace{1pt} \partial_x^2$ whereas the right-hand side of \eqref{example_OU-2} involves both the spatial operator $\partial_x^2$ and the \textit{time-dependent} multiplication operator by $e^{-2\alpha t}$.  This observation suggests: 1) establishing FPKEs for time-changed processes via several different forms of FPKEs for the corresponding untime-changed processes, and 2) choosing appropriate forms for handling specific problems.

\begin{remark}
\begin{em}
Example \hyperlink{Application-5}{5} treated a Gaussian process given by the It\^o integral of the deterministic integrand $e^{-\alpha(t-s)}$.  
Malliavin calculus, which is valid for an arbitrary Gaussian integrator, can be regarded as an extension of It\^o integration (see \cite{Janson,Malliavin,Nualart}). It is known that Malliavin-type stochastic integrals of deterministic integrands are again Gaussian processes.  Therefore, if the variance function of such a stochastic integral satisfies the technical conditions specified in Theorem \ref{theorem_FP-volterra}, then the FPKE for the time-changed stochastic integral is explicitly given by 
\eqref{FP-volterra3-2}, or equivalently, \eqref{FP-volterra3-1}.  
\end{em}
\end{remark}

\bibliographystyle{amsplain}

\end{document}